\theoremstyle{plain}
\newtheorem{definition}{Definition}[section]
\newtheorem{theorem}[definition]{Theorem}
\newtheorem*{theorem*}{Theorem}
\newtheorem*{remark*}{Remark}
\newtheorem*{sideremark*}{Side Remark}\newtheorem*{mt*}{Main Theorem}
\newtheorem*{claim*}{Claim}
\newtheorem*{q*}{Question}
\newtheorem*{corollary*}{Corollary}
\newtheorem*{proposition*}{Proposition}
\newcommand{\R}{\mathbb{R}}
\newcommand{\p}{\partial}
\newcommand{\e}{\epsilon}
\newcommand{\map}{\rightarrow}
\newcommand{\G}{\Gamma}
\newcommand{\essinf}{{\rm ess\, inf}}\newcommand{\esssup}{{\rm ess\, sup}}
\newcommand{\sing}{{\bf sing}_{\rm Cl}}
\newcommand{\mres}{\mathbin{\vrule height 1.6ex depth 0pt width
0.13ex\vrule height 0.13ex depth 0pt width 1.3ex}}
\def\XXint#1#2#3{{\setbox0=\hbox{$#1{#2#3}{\int}$ }
\vcenter{\hbox{$#2#3$ }}\kern-.6\wd0}}
\numberwithin{equation}{section}
\numberwithin{figure}{section}
\title{Cartan--Whitney Presentation, Non-smooth Analysis and Smoothability of Manifolds: On a theorem of Kondo--Tanaka}
\author{Siran Li}
\address{Siran Li: Department of Mathematics, Rice University, MS 136
P.O. Box 1892, Houston, Texas, 77251-1892, USA\, $\bullet$ \,  Department of Mathematics, McGill University, Burnside Hall, 805 Sherbrooke Street West, Montreal, Quebec, H3A 0B9, Canada.}
\email{\texttt{Siran.Li@rice.edu}}
\keywords{Smoothability of Manifolds; Non-smooth Analysis; Singular Points; Lipschitz Maps; Cartan--Whitney Presentations; Flat Forms.}
\subjclass[2010]{Primary: 	49Q15, 58A05, 57R10, 57R12, 57R55; Secondary: 49J52, 90C56}
\date{\today}
\begin{document}

\maketitle

\begin{abstract}
Using tools and results from geometric measure theory, we give a simple new proof of the main result in \cite{kt} (Theorem 1.3 in K. Kondo and M. Tanaka, Approximation of Lipschitz Maps via Immersions and Differentiable Exotic Sphere Theorems, \textit{Nonlinear Anal.} \textbf{155} (2017), 219--249), as well as the  converse statement. It explores the connections between the theory of non-smooth analysis {\it \`{a} la} F.~H. Clarke and the existence of special systems of Whitney flat $1$-forms with Sobolev regularity on certain families of homology manifolds.
\end{abstract}

\section{Introduction}

The smoothability of topological manifolds has long been a question at the heart of differential and geometric topology: {\em Given a manifold with structures of weak regularity, e.g., a topological, homology, or Lipschitz manifold, does it admit a smooth structure?}

Foundational works on  smoothability of manifolds by Whitehead \cite{w'} and Cairns \cite{c} (also see Pugh \cite{p} for an alternative, modern proof), Stallings \cite{st}, Shikata \cite{sh1, sh2},  Moise \cite{m} and Kirby--Siebenmann \cite{ks}, among many others, provide deep insights into the structures of manifolds. As popularised by Gromov in \cite{g'}, they address the simple yet fundamental question: ``what is a manifold''. The geometrical and topological developments in this line culminate in the discovery of {\em exotic} ({\it i.e.}, homeomorphic but not diffeomorphic) structures; see Milnor \cite{milnor}, Freedman \cite{fr}, Donaldson--Sullivan \cite{dosu} and Gromov \cite{g}.

On the other hand, using the techniques from geometric measure theory, an analytic approach has been developed to tackle the smoothability problem. Sullivan \cite{s1, s2, s3} initiated the programme of detecting the smoothability of a Lipschitz manifold using the notion of a  ``measurable cotangent bundle''. Its sections $\vartheta$ are identified with {\em flat forms}, which were introduced by Whitney in his theory of geometric integration theory \cite{w}. Roughly speaking, $\vartheta$ is a local coframe with weak regularity and an essentially nondegenerate volume density, and the integration of $\vartheta$ along segments gives rise to a branched covering map $F_\vartheta$. Heinonen--Rickman \cite{hr} and Heinonen--Sullivan \cite{hs} proved that the local smoothability of a Lipschitz manifold is equivalent to that  the local degree of $F_\vartheta = 1$; furthermore, Heinonen--Keith \cite{hk} established its equivalence with the Sobolev regularity condition $\vartheta \in W^{1,2}_{\rm loc}$.

In a recent paper \cite{kt},  a brand-new perspective has been adopted by Kondo--Tanaka to approach the smoothability problem. It connects F.~H. Clarke's theory of non-smooth analysis (\cite{c1,c2}), originally developed for applications in optimisation and control theory, to the approximation of Lipschitz maps by diffeomorphisms. The Main Theorem $1.3$ in \cite{kt} is as follows:
\begin{theorem}\label{thm: old}
Let $M$ be an $n$-dimensional compact Riemannian manifold, and let $N$ be an $\nu$-dimensional Riemannian manifold, where $2 \leq n \leq \nu$. Then, a Lipschitz map $F: M \map N$ is approximable by smooth immersions if $\sing\,F = \emptyset$.
\end{theorem}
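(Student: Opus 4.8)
The plan is to produce the approximating immersions by a careful mollification of $F$; the crux is that $\sing\,F=\emptyset$ \emph{together with} the compactness of $M$ promotes the pointwise nondegeneracy of the Clarke generalized differential to a \emph{uniform}, quantitative one, and such uniformity survives smoothing. Concretely, I would first pass to a Euclidean target: fix a smooth isometric embedding $\iota\colon N\emb\R^{K}$ (Nash), and let $\pi\colon U\map\iota(N)$ be the nearest-point retraction on a tubular neighbourhood $U$ of $\iota(N)$, so that $\pi$ is smooth and $\mathrm{D}\pi(x)$ is the orthogonal projection of $\R^{K}$ onto $T_{x}\iota(N)$ whenever $x\in\iota(N)$. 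Put $\widetilde F:=\iota\circ F\colon M\map\R^{K}$, a Lipschitz map. The hypothesis $\sing\,F=\emptyset$ asserts that every $A$ in the Clarke generalized differential $\partial F(p)$ is injective, of rank $n$, for all $p\in M$; the $C^{1}$ chain rule for generalized differentials gives $\partial\widetilde F(p)\subseteq\{\mathrm{D}\iota(F(p))\circ A:A\in\partial F(p)\}$, and as $\mathrm{D}\iota$ is a fibrewise isometry, every element of $\partial\widetilde F(p)$ is injective too. Working in a finite atlas, the set-valued map $p\mapsto\partial\widetilde F(p)$ has nonempty compact convex values and is upper semicontinuous, the set of injective matrices is open, and $M$ is compact; a closed-graph argument then produces a constant $\delta>0$ such that $\sigma_{\min}(A)\geq\delta$ for every $p\in M$ and every $A\in\partial\widetilde F(p)$, where $\sigma_{\min}$ denotes the least singular value.

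Next comes the smoothing. Using a subordinate partition of unity $\{\phi_{i}\}$ and ordinary mollification in each chart, I would construct smooth maps $G_{\e}\colon M\map\R^{K}$ with $G_{\e}\to\widetilde F$ uniformly as $\e\downarrow0$ and with $\sup_{M}\|\mathrm{D}G_{\e}\|$ bounded independently of $\e$. The decisive feature of this construction is that, for some modulus $\omega(\e)\to0$, at every $p$ the matrix $\mathrm{D}G_{\e}(p)$ lies within distance $\omega(\e)$ of $\overline{\mathrm{conv}}\{\mathrm{D}\widetilde F(q):q\in B_{\omega(\e)}(p),\ \widetilde F\text{ differentiable at }q\}$: mollification replaces a derivative by a weighted average of nearby derivatives, while the partition-of-unity correction terms are $O(\e)$ because $\sum_{i}\mathrm{D}\phi_{i}=0$ and $\widetilde F$ is Lipschitz. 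Feeding in the upper semicontinuity of $\partial\widetilde F$ (used uniformly over the compact $M$) and the convexity of its values, one gets, for all sufficiently small $\e$, $\mathrm{D}G_{\e}(p)\in\partial\widetilde F(p)+B_{\delta/2}$ for every $p$, and hence $\sigma_{\min}(\mathrm{D}G_{\e}(p))\geq\delta/2$ uniformly in $p$; in particular every $G_{\e}$ is a smooth immersion of $M$ into $\R^{K}$.

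Finally I would push back to $N$. For small $\e$ one has $G_{\e}(M)\subset U$, so $F_{\e}:=\pi\circ G_{\e}\colon M\map N$ is a well-defined smooth map, and $F_{\e}\to F$ uniformly. Writing $\mathrm{D}F_{\e}(p)=\mathrm{D}\pi(G_{\e}(p))\circ\mathrm{D}G_{\e}(p)$: since $G_{\e}(p)$ lies within $O(\e)$ of $\iota(F(p))$, $\mathrm{D}\pi(G_{\e}(p))$ lies within $O(\e)$ of the orthogonal projection onto $T_{F(p)}\iota(N)$; and $\mathrm{D}G_{\e}(p)=A_{p}+E_{p}$ with $A_{p}\in\partial\widetilde F(p)$ — which already maps into $T_{F(p)}\iota(N)$ — and $\|E_{p}\|\leq\delta/2$. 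As the orthogonal projection is a contraction fixing that subspace, $\|\mathrm{D}F_{\e}(p)-A_{p}\|\leq\delta/2+O(\e)$, so $\sigma_{\min}(\mathrm{D}F_{\e}(p))\geq\delta-\delta/2-O(\e)\geq\delta/4>0$ for all sufficiently small $\e$, uniformly in $p$. Thus each $F_{\e}$ is a smooth immersion, and $F$ is approximable by smooth immersions, as claimed.

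The one genuinely delicate step is the passage in the second paragraph: verifying that the differential of the mollification stays inside a small neighbourhood of the Clarke generalized differential. Morally this ought to be automatic, since $\partial F(p)$ is by construction the convex closure of limits of nearby classical differentials — exactly what mollification reproduces — but turning it into clean \emph{uniform} estimates forces the chart, partition-of-unity and retraction bookkeeping to be carried out with some care, and it is precisely the compactness of $M$ that converts the pointwise hypothesis into the single constant $\delta$, so that all the ``$\e$ sufficiently small'' choices can be made simultaneously. (Alternatively, one could recast the nondegeneracy in terms of Whitney flat $1$-forms and Cartan--Whitney presentations — the viewpoint emphasised in this paper — but that machinery does not seem necessary for this implication.)
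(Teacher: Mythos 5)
Your argument is correct in its overall architecture and takes a genuinely different route from the paper. The paper never mollifies: it isometrically embeds $M$ and $N$ into Euclidean spaces, observes that a coordinate coframe $\{dx^1,\dots,dx^n\}$ is a smooth Cartan--Whitney presentation, pushes it forward under $F$, and shows that $\sing\,F=\emptyset$ is precisely the nondegeneracy $\essinf\,\star F_\#(\rho_1\wedge\dots\wedge\rho_n)>0$ needed for $F_\#\rho$ to be a measurable cotangent structure in $W^{1,2}_{\rm loc}$; the Heinonen--Keith theorem then gives smoothability of the image, which is translated back into approximability. That detour through geometric measure theory is what buys the converse implication and the generalisation to homology manifolds (Theorem 3.1). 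Your direct mollification argument proves only the ``if'' direction asked for here, but it is elementary, self-contained, quantitative (a uniform bound $\sigma_{\min}(DF_\e)\ge\delta/4$), and it constructs the approximating immersions explicitly rather than inferring their existence from a smooth structure on the image.

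Two steps need more care than you give them. First, the ``decisive feature'' of your second paragraph: the uniform inclusion $DG_\e(p)\in\partial\widetilde F(p)+B_{\delta/2}$ should not be derived from ``uniform upper semicontinuity'' of $p\mapsto\partial\widetilde F(p)$. The Clarke differential is u.s.c.\ at each point but is not lower semicontinuous, so the two-point statement $\partial\widetilde F(q)\subset\partial\widetilde F(p)+B_\eta$ for all $d(p,q)<r$, with $r$ uniform in $p$, does not follow from pointwise u.s.c.\ by the obvious compactness argument. The clean fix is to prove directly the estimate you need: if it failed, Carath\'{e}odory's theorem would give points $p_j$, radii $r_j\downarrow 0$ and convex combinations $A_j=\sum_k\lambda_{j,k}D\widetilde F(q_{j,k})$ with $q_{j,k}\in B_{r_j}(p_j)$ and $\sigma_{\min}(A_j)<\delta-\eta$; passing to subsequences, each $D\widetilde F(q_{j,k})$ converges to an element of $\partial\widetilde F(p)$ at the limit point $p$, so $\lim_j A_j\in\partial\widetilde F(p)$ by convexity, contradicting $\sigma_{\min}\ge\delta$ there. (You are right to aim at a neighbourhood of the convex set $\partial\widetilde F(p)$ rather than at individual nearby derivatives: $\sigma_{\min}$ is not concave, so a convex average of maps with $\sigma_{\min}\ge\delta$ can be singular.) Second, the paper's definition of approximability also demands $\|F_\e\|_{{\rm Lip}(M,N)}\le(1+\e)\|F\|_{{\rm Lip}(M,N)}$; your construction does deliver this --- convex averages of derivatives do not increase operator norm, the partition-of-unity corrections are $O(\e)$, $\|D\pi\|\le 1+O(\e)$ on an $\e$-neighbourhood of $\iota(N)$, and on length spaces the Lipschitz constant equals the essential supremum of the local dilatation --- but you should verify it, since ``$\sup_M\|DG_\e\|$ bounded independently of $\e$'' is strictly weaker than what the definition requires.
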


$\sing\,F$ denotes the singular set of $F$ in the sense of Clarke \cite{c1, c2}; see Definition \ref{def: singular set}. The rigorous definitions for relevant geometric-analytic notions shall be given in $\S \ref{sec: prelim}$. The proof in \cite{kt} may be viewed as an intricate generalisation of the classical arguments by Grove--Shiohama \cite{gs}.

In this note we present a simple, new proof of Theorem \ref{thm: main}, which also establishes its converse at the same strike. Our proof is based on the geometric measure theoretic studies on the smoothability problem (see  \cite{s1, s2, s3, hr, hs, hk}). In particular, we make crucial use of the results due to Heinonen--Keith \cite{hk}. We hope it may provide an avenue for further explorations on the linkages between Clarke's non-smooth analysis  \cite{c1, c2} and geometric measure theory.

 In summary, we shall prove:
\begin{theorem}\label{thm: main}
In the setting of Theorem \ref{thm: old}, the Lipschitz map $F: M \map N$ is approximable by smooth immersions if and only if $\sing\,F = \emptyset$.
\end{theorem}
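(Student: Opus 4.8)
The plan is to treat the two implications of Theorem~\ref{thm: main} separately, reducing each to a local statement about the system of Whitney flat $1$-forms obtained by pulling back the coordinate $1$-forms of $N$ along $F$. Fix $p\in M$, a chart $(V;y^{1},\dots,y^{\nu})$ of $N$ about $F(p)$, and a chart of $M$ about $p$ whose image under $F$ lies in $V$; then each $F^{i}:=y^{i}\circ F$ is Lipschitz, $\rho^{i}:=\dd F^{i}$ is a bounded closed flat $1$-form, and $\rho:=(\rho^{1},\dots,\rho^{\nu})$ is a measurable coframe of rank $n$ following Sullivan \cite{s1,s2,s3}. The backbone is the chain of local equivalences: (a) $\sing\,F=\emptyset$ near $p$; (b) $F$ is locally bi-Lipschitz onto its image near $p$; (c) after composition with a suitable linear projection $\R^{\nu}\to\R^{n}$ when $n<\nu$, $\rho$ becomes a Cartan--Whitney presentation admitting $W^{1,2}_{\mathrm{loc}}$ representatives whose associated covering is unbranched; (d) $F$ is locally approximable by smooth immersions. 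The implication (a)$\Rightarrow$(b) is Clarke's generalized inverse function theorem \cite{c1,c2} (using that $x\mapsto\partial F(x)$ is upper semicontinuous with compact convex values, so that vanishing of $\sing\,F$ yields, on a small compact $K\ni p$, a uniform lower bound on the smallest singular value over $\bigcup_{x\in K}\partial F(x)$, hence a bi-Lipschitz estimate via the mean value inclusion); (b)$\Rightarrow$(a) is a rigidity statement, to the effect that the oscillation of $\dd F$ required to place a rank-deficient matrix in $\partial F(p)$ is incompatible with $F$ being even locally bi-Lipschitz onto its image; (b)$\Leftrightarrow$(c) rests on the Heinonen--Keith characterization of smoothability \cite{hk} together with \cite{hr,hs}; and (c)$\Leftrightarrow$(d) is elementary once uniform nondegeneracy is available. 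A partition-of-unity argument globalizes each direction.

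\medskip

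\textbf{The ``if'' direction} (a new proof of Theorem~\ref{thm: old}). Assume $\sing\,F=\emptyset$. Mollifying $F$ in a chart, $F_{\e}:=F\ast\psi_{\e}$ is smooth, $F_{\e}\to F$ in $C^{0}$, and $DF_{\e}(x)=\int DF(y)\,\psi_{\e}(x-y)\,\dd y$ is a weighted average of $\{DF(y):y\in B_{\e}(x)\}$; by upper semicontinuity this lies within $o(1)$ of the convex hull $\partial F(x)$, which on a small $K$ is a compact family of injective maps with smallest singular value $\ge c$. Hence $DF_{\e}(x)$ has rank $n$ for $\e$ small, i.e.\ $F_{\e}$ is a smooth immersion. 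When $\nu=n$ this says $\dd F_{\e}$ is a smooth Cartan--Whitney presentation, so by \cite{hk} the map $F_{\e}$ is an unbranched local bi-Lipschitz homeomorphism, i.e.\ a local diffeomorphism --- this is where \cite{hk} replaces the bare inverse function theorem once the argument is patched together. Finally, using a partition of unity $\{\phi_{\alpha}\}$ subordinate to charts over which $F$ maps into a single chart of $N$, the differential of the patched approximant differs from $\sum_{\alpha}\phi_{\alpha}\,DF_{\e}$ by $\sum_{\alpha}\dd\phi_{\alpha}\otimes(F_{\e}-F)=O(\e)$, so immersivity survives and $C^{0}$-convergence persists.

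\medskip

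\textbf{The ``only if'' direction} (the converse, which is new). We argue by contraposition. If $p\in\sing\,F$, then $\partial F(p)$ contains a linear map of rank $\le n-1$. Suppose, towards a contradiction, that $F$ were approximable by smooth immersions in the sense of \cite{kt}: the content we use is the existence of smooth immersions $G_{k}\to F$ in $C^{0}$ that are \emph{uniformly} bi-Lipschitz onto their images, say $\|G_{k}(x)-G_{k}(x')\|\ge c_{0}^{-1}\,d_{M}(x,x')$ for all $k$. Letting $k\to\infty$ gives $\|F(x)-F(x')\|\ge c_{0}^{-1}\,d_{M}(x,x')$, so $F$ is locally bi-Lipschitz onto its image; by (b)$\Rightarrow$(a) above this forces $\sing\,F=\emptyset$, contradicting $p\in\sing\,F$. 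Equivalently, in the idiom of this note: a $C^{0}$-limit of uniformly bi-Lipschitz smooth immersions yields, via \cite{hk}, a $W^{1,2}_{\mathrm{loc}}$ Cartan--Whitney presentation representing $\rho$ near $p$, and such a presentation admits no Clarke-singular point.

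\medskip

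\textbf{Where the difficulty lies.} Three points carry the weight. First, the balanced case $\nu=n$: there ``immersion'' means ``local diffeomorphism'', and while the mollified map $F_{\e}$ is a local diffeomorphism \emph{locally}, controlling the \emph{globally} patched approximant --- excluding that it acts as a nontrivial branched covering --- is exactly what the Heinonen--Keith theorem on $W^{1,2}_{\mathrm{loc}}$ Cartan--Whitney presentations \cite{hk} (sharpening \cite{hr,hs}) delivers. Second, the implication (b)$\Rightarrow$(a): proving that bi-Lipschitz-ness onto the image is incompatible with a rank-deficient element of the generalized Jacobian --- i.e.\ that the requisite oscillation of $\dd F$ cannot occur for a single-valued, bi-Lipschitz map --- is the technical core of the converse. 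Third, and running through everything, one must fix the precise sense of ``approximable by smooth immersions'' in \cite{kt} under which (d)$\Leftrightarrow$(a) holds: it must be strong enough to rule out degenerating families (e.g.\ the smoothings of a fold map, which are $C^{0}$-close immersions yet whose bi-Lipschitz constants blow up), and the uniform bi-Lipschitz control used above --- the bridge to the Heinonen--Keith bi-Lipschitz parametrization theory --- is the natural candidate; making this compatibility precise is where the main work resides.
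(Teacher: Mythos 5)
Your forward direction is essentially the mollification-plus-Clarke-calculus argument (close in spirit to Kondo--Tanaka's original route) rather than the paper's: the paper instead pushes forward a fixed Cartan--Whitney presentation $\rho$ on $M$ along $F$ and reduces both implications to the single scalar condition $\essinf_{F(U)}\star F_\#(\rho_1\wedge\cdots\wedge\rho_n)>0$, matched against the Heinonen--Keith/Sullivan measurable-cotangent-structure criterion applied to the \emph{image} $F(U)$. Your ``if'' half could likely be repaired, but as written it does not address the constraint $\|\iota_\e\|_{{\rm Lip}}\leq(1+\e)\|F\|_{{\rm Lip}}$ required by the definition of approximability, and the patched sum $\sum_\alpha\phi_\alpha F_\e$ is not defined when the target $N$ is curved (one needs normal coordinates or a centre-of-mass construction, and then the $O(\e)$ error estimate must be redone).

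The converse, however, contains a genuine gap, located exactly where you flag ``the technical core'': the implication (b)$\Rightarrow$(a) is false. A locally bi-Lipschitz map can have a nonempty Clarke singular set. Take $n=\nu'=2$ and the logarithmic spiral $F(z)=z\,e^{i\log|z|}$ on the unit disc, $F(0)=0$. Relative to polar orthonormal frames its differential is the constant invertible matrix $A=\left(\begin{smallmatrix}1&0\\1&1\end{smallmatrix}\right)$, so $F$ is uniformly bi-Lipschitz; but in Cartesian frames the differential along a ray is $R_{\log r}A$, and since $\log r$ equidistributes mod $2\pi$ as $r\searrow 0$, the limit set of differentials at the origin contains $\{R_\alpha A:\alpha\in[0,2\pi)\}$, whose convex hull contains $\bigl(\tfrac{1}{2\pi}\int_0^{2\pi}R_\alpha\,{\rm d}\alpha\bigr)A=0$. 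Hence $0\in\p F(0)$ and $0\in\sing F$ although $F$ is bi-Lipschitz: the convex hull in Clarke's definition is precisely what defeats the ``rigidity statement'' you invoke. Independently, your extraction of a uniform lower bound $\|G_k(x)-G_k(x')\|\geq c_0^{-1}d_M(x,x')$ from ``approximable by smooth immersions'' is not licensed by the definition in the paper, which imposes only $C^0$-closeness and an \emph{upper} bound on the Lipschitz norms of the approximants; you concede this yourself, but it means the contrapositive argument never gets off the ground. The paper sidesteps both obstacles by never routing the converse through pointwise bi-Lipschitz estimates: non-approximability is converted, via Theorem \ref{thm: hk}, into $\essinf_{F(U)}\star F_\#(\rho_1\wedge\cdots\wedge\rho_n)=0$, and a sequence of points realising this degeneration produces, by a diagonalisation, a rank-deficient element of $\p F(q')$ directly.
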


Alternative analytic approaches, besides the geometric measure theoretic and non-smooth analytic ones, have also been developed to study the smoothability problem; see Ball--Zarnescu \cite{bz} and the references cited therein. This paper and the subsequent developments also address the applications of manifold smoothability theorems to the modelling and analysis of liquid crystals.

\section{Background}\label{sec: prelim}

In this section we briefly discuss some preliminary materials on non-smooth analysis and geometric measure theory. For comprehensive treatments, we refer to \cite{c1, c2, kt} on  the former topic and to \cite{f, w, dase} on the latter.

\smallskip
\noindent
{\bf Smoothability.} A topological manifold $M$ is said to possess a $\wp$-structure ($\wp \in \{$Lipschitz, $C^{k,\alpha}$, smooth=$C^\infty$, analytic...$\}$) if and only if there is an atlas $\{(U_\alpha, \phi_\alpha): \alpha \in \mathcal{I}\}$ of $M$ such that all the transition maps $$\phi_\alpha \circ \phi_\beta^{-1}: \R^n \supset \phi_\beta(U_\alpha \cap U_\beta) \longrightarrow \phi_\alpha(U_\alpha \cap U_\beta) \subset \R^n$$ have $\wp$-regularity. By definition, a topological manifold has a $C^0$-structure. A $\wp$-manifold $M$ is said to be {\em smoothable} if there is a sub-atlas with respect to which $M$ admits a $C^\infty$-structure.

\smallskip
\noindent
{\bf Cartan--Whitney presentation.} Let $\mathcal{O} \subset \R^\nu$ be an open subset. A $k$-form $\omega$ is said to be a {\em (Whitney) flat $k$-form} if and only if $\omega$ has measurable coefficients and
\begin{equation*}
\omega, \, d\omega \in L^\infty(\mathcal{O}).
\end{equation*}
Here $d\omega$ is understood in the weak ({\it i.e.}, distributional) sense.

In what follows the definition of {\em Cartan--Whitney presentations} will be given. Let us first recall a prototypical case: Consider an $n$-dimensional topological manifold $M$, and let $U$ be an open neighbourhood of some point $p \in M$ with a well-defined orientation. Then there exists a local coframe $\{\theta^1, \ldots, \theta^n\} \in \G(T^*U)$ such that the differential $n$-form $\theta^1 \wedge\ldots\wedge \theta^n$ agrees with the orientation and is nondegenerate: for a constant $c>0$, 
\begin{equation}\label{theta i}
\int_U \theta^1 \wedge \ldots \wedge \theta^n \,{\rm d}V_{g_M} \geq c\, {\rm Vol}_{g_M}(U) > 0.
\end{equation}
Here and throughout, for a fibre bundle $E$ over $M$, $\G(E)$ denotes the space of its sections. Also, ${\rm d}V_{g_M}$ is the volume measure induced by $g_M$.

The notion of Cartan--Whitney presentations serves as a generalisation of the $n$-form $\theta^1 \wedge\ldots\wedge \theta^n$ in the above. Let $X$ be a metric space that is a ``nice'' $n$-dimensional subset of $\R^\nu$, $\nu \geq n$. Let $U$ be an open neighbourhood of a fixed point $p$ in $X$. A {\em (local) Cartan--Whitney presentation near $p$} consists of an $n$-tuple $\rho=(\rho_1,\ldots,\rho_n)$ of flat $1$-forms defined in an $\R^\nu$-neighbourhood $\mathcal{O}$ of $p$, such that $\mathcal{O}\cap X \subset U$ and 
\begin{equation}\label{rho}
\star (\rho_1\wedge\ldots\wedge \rho_n) \geq c' >0\qquad \text{ \textit{a.e.} on } \mathcal{O} \cap X   
\end{equation}
for some constant $c'>0$.

What does it mean by ``nice'' for $X$? On one hand, in the above definition $\rho$ is defined on $\mathcal{O} \subset \R^\nu$, so we have to ensure that its local restrictions to $X$ make sense. On the other hand, $U \subset X$ needs to have a good sense of orientation, so that the Hodge-star in \eqref{rho} is well-defined. Indeed, by the work \cite{se} of Semmes, the following conditions ensure that $X$ is nice enough to make sense of the above definition of Cartan--Whitney presentation:
\begin{align}\label{X}
&\text{$X$ is a locally Ahlfors $n$-regular,}\nonumber\\
&\qquad\qquad \text{ locally linearly contractible homology $n$-manifold, } n \geq 2.
\end{align}
Here, $X$ is {\em locally Ahlfors $n$-regular} if and only if $X$ has Hausdorff dimension $n$, and for every compact $K \Subset X$ there exist numbers $r_K >0$, $C_K\geq 1$ such that $$C_K^{-1}r^n \leq \mathcal{H}^n(B(x,r)) \leq C_Kr^n$$ for each metric ball $B(x,r) \subset X$ with $x \in K$ and $r<r_K$. $X$ is {\rm locally linearly contractible} if and only if for every compact $K \Subset X$ there exist numbers $r_K'>0$, $C_K' \geq 1$  such that every metric ball $B(x,r)  \subset X$ with $x \in K$ and $r<r_K'$ contracts to a point inside $B(x, C_K'r)$. Finally, $X$ is {\em homology $n$-manifold} if and only if it is  separable, metrisable, locally compact, locally contractible, and that for each $x \in X$ the following identity on homology groups holds:
\begin{equation}\label{hmlg mfd}
H_\bullet (X, X\sim \{x\}; \mathbb{Z}) \cong H_\bullet (\R^n, \R^n \setminus \{0\}; \mathbb{Z}). 
\end{equation}
The quadruple $(C_K, r_K; C_K', r_K')$ is called the {\em local data} of $X$ on $K$. One may refer to $\S \S 1$--3 in Heinonen--Keith \cite{hk} for detailed discussions. The punchline is: a local Cartan--Whitney presentation $\rho$ can be defined on $X \subset \R^\nu$ with weak regularity as in \eqref{X}.

\smallskip
\noindent
{\bf Sobolev space.} Next let us define the Sobolev space $W^{1,2}$ on $X$ satisfying \eqref{X}: it is the norm completion of Lipschitz functions $\phi: X \map \R$ with respect to
\begin{equation*}
\|\phi\|_{1,2} := \|\phi\|_{L^2(X)} + \|{\rm ap} D\phi\|_{L^2(X)},
\end{equation*}
where the {\em approximate differential} ${\rm ap} D\phi$ is {\it a.e.} defined on $X$ as in $3.2.19$ of Federer \cite{f}.


\smallskip
\noindent
{\bf Measureble cotangent structure; the theorem of Heinonen--Keith on smoothability.} Let $X$ be as in \eqref{X}. A result due to Cheeger \cite{ch} implies that $X$ is $n$-rectifiable. For any flat $1$-form $\omega$ defined on a open subset $\mathcal{O} \subset \R^\nu$ such that $\mathcal{O} \cap X =: U$ is non-empty, one can define the restriction $\omega \mres U$ as a map from $U$ to $T^*_x U$. The space $T^*_x U$ is viewed as a measurable section of $T^*U \subset T^*\R^\nu$, {\it i.e.} the {\em measurable cotangent bundle}; see $\S 3.4$ in \cite{hk} and p.303, Theorem 9A in \cite{w}.

Furthermore, in the pioneering works \cite{s1, s2, s3} Sullivan introduced the notion of a {\em measurable cotangent structure}. It consists of a pair $(E,\iota)$, where $E$ is an oriented rank-$n$ Lipschitz vector bundle over $X$, and $\iota$ is a module map over ${\rm Lip}(X,\R)$ from Lipschitz sections of $E$ to flat $1$-forms on $X$, such that the following holds: If $\sigma_1, \ldots, \sigma_n: X \map E$ are Lipschitz sections such that $\sigma_1 \wedge \ldots\wedge \sigma_n$ determines the chosen orientation on $E$, then for every $\alpha$ (index of an oriented, trivialised atlas $\{U_\alpha\}$) the flat $n$-form $$\iota(\sigma_1)|_\alpha \wedge \ldots \wedge \iota(\sigma_n)|_\alpha = \tau_\alpha dx^1 \wedge \ldots \wedge dx^n$$ satisfies $$\essinf_K\, \tau_\alpha >0$$ in each compact subset $K \Subset U_\alpha$. In practice, one considers $E=T^*U_\alpha$ as in the above paragraph.

The main result in \cite{hk} can be summarised as follows:
\begin{theorem}[Theorems 1.1 and 1.2, \cite{hk}]\label{thm: hk}
\begin{enumerate}
\item
If $X \subset \R^\nu$ as in \eqref{X} admits a Cartan--Whitney presentation in $W^{1,2}_{\rm loc}(X)$, then it is locally bi-Lipschitz parametrised by $\R^n$.
\item
An oriented Lipschitz manifold is smoothable if and only if it admits a measurable cotangent structure with local frames in $W^{1,2}_{\rm loc}(X)$.
\end{enumerate}
\end{theorem}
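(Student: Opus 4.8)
The plan is to funnel both assertions of Theorem~\ref{thm: hk} through the branched covering $F_\vartheta$ attached to a Cartan--Whitney presentation, and to isolate the one genuinely new input as the equivalence \emph{$\rho \in W^{1,2}_{\rm loc}(X)$ if and only if $F_\vartheta$ has local degree $1$}. Granting this, part (1) follows because local degree $1$ makes $F_\vartheta$ a local homeomorphism which, carrying uniform distortion bounds, is locally bi-Lipschitz; part (2) follows by combining the same dichotomy with the Heinonen--Rickman \cite{hr} and Heinonen--Sullivan \cite{hs} criterion, recalled in the introduction, identifying local smoothability with the triviality of the branching of $F_\vartheta$, together with Sullivan's smoothing programme \cite{s1, s2, s3}. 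The heart of the matter is thus the degree/Sobolev dichotomy, and I would build the proof around it.

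\textbf{Construction of $F_\vartheta$ and its branched-covering structure.} Since $X$ satisfying \eqref{X} is $n$-rectifiable (Cheeger \cite{ch}) and carries a Poincaré inequality (Semmes \cite{se}), I would first realise the map. One caveat must be respected at the outset: the forms $\rho_i$ are only flat, not closed, so there is no naive primitive; instead, as recalled in the introduction, $F_\vartheta = (f_1,\ldots,f_n): U \map \R^n$ is produced by integrating $\rho$ along segments, the non-closedness being absorbed as a controlled flat error. The $L^\infty$ bound on $\rho$ renders $F_\vartheta$ Lipschitz, while the nondegeneracy \eqref{rho}, $\star(\rho_1 \wedge \cdots \wedge \rho_n) \geq c' > 0$, bounds its generalised Jacobian below; together these present $F_\vartheta$ as a mapping of bounded length distortion, hence quasiregular. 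By Reshetnyak's theorem $F_\vartheta$ is then discrete and open, so a branched covering with a well-defined positive-integer local index $i(x, F_\vartheta)$, and off its branch set $B_{F_\vartheta}$ it is a local bi-Lipschitz homeomorphism. Everything reduces to deciding whether $B_{F_\vartheta} = \emptyset$.

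\textbf{The crux: $W^{1,2}$ forbids branching, sharply.} One direction is soft: if the local degree is $1$ then $F_\vartheta$ is already a local bi-Lipschitz homeomorphism, so pulling back the coordinate coframe exhibits $\rho$ as locally Lipschitz and \emph{a fortiori} in $W^{1,2}_{\rm loc}$. The hard direction is that $\rho \in W^{1,2}_{\rm loc}$ forces degree $1$, and I would prove it by contradiction through the geometry of branch sets. The branch set $B_{F_\vartheta}$ of a quasiregular map has Hausdorff and topological codimension at least $2$, and near a genuine branch point $F_\vartheta$ is modelled on a winding map $w_k$ of degree $k \geq 2$. Because $w_k$ is positively $1$-homogeneous, the coframe $\rho \leftrightarrow D F_\vartheta$ is bounded but its derivative $D^2 F_\vartheta$ decays only like $r^{-1}$ transverse to the codimension-$2$ set $B_{F_\vartheta}$; the Dirichlet energy then behaves like $\int_0 r^{-2}\cdot r\,dr = \int_0 r^{-1}\,dr$, a logarithmic divergence. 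Hence the winding model violates $\rho \in W^{1,2}_{\rm loc}$, and the exponent is \emph{sharp}, as $\int_0 r^{1-p}\,dr$ converges for every $p<2$ so that $W^{1,p}_{\rm loc}$ cannot detect branching. \textbf{This borderline energy lower bound is the principal obstacle:} the clean winding computation is only a model, and the real work is to make it quantitative near an \emph{arbitrary}, possibly wild, branch set and to reconcile it with the non-closedness of $\rho$. For this I would lean on the modulus-of-continuity and capacity theory for bounded-length-distortion mappings, together with the rectifiability and Poincaré structure of $X$.

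\textbf{Assembling the theorem.} With the dichotomy in hand, part (1) is immediate, $F_\vartheta$ being the sought local bi-Lipschitz parametrisation by $\R^n$. For part (2), the forward implication is routine: a smooth structure supplies smooth coframes, hence a measurable cotangent structure whose local frames are smooth and \emph{a fortiori} in $W^{1,2}_{\rm loc}$. Conversely, a measurable cotangent structure with $W^{1,2}_{\rm loc}$ frames restricts, chart by chart, to local Cartan--Whitney presentations in $W^{1,2}_{\rm loc}$; by the dichotomy each has $B_{F_\vartheta} = \emptyset$, yielding compatible bi-Lipschitz charts and local degree $1$. Feeding degree $1$ into the Heinonen--Rickman \cite{hr} and Heinonen--Sullivan \cite{hs} criterion gives local smoothability, after which Sullivan's theory \cite{s1, s2, s3}, with the Donaldson--Sullivan \cite{dosu} input and the usual exclusion of dimension four, promotes the bi-Lipschitz atlas to a genuine smooth structure. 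This closes the reduction.
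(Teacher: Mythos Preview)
The paper does not prove Theorem~\ref{thm: hk}. It is quoted verbatim from Heinonen--Keith \cite{hk} (as ``Theorems~1.1 and~1.2, \cite{hk}'') in the preliminaries section and is used throughout \S3 as a black box: the proof of Theorem~\ref{thm: general} invokes Theorem~\ref{thm: hk}(2) to reduce smoothability of $F(U)$ to the existence of a measurable cotangent structure with $W^{1,2}_{\rm loc}$ frames, and invokes Theorem~\ref{thm: hk}(1) at the end to deduce that $U$ is a Lipschitz $n$-manifold up to reparametrisation. So there is no ``paper's own proof'' against which to compare your attempt.

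That said, your sketch does track the architecture of the actual proof in \cite{hk} (and the narrative recalled in the introduction here): construct the branched covering $F_\vartheta$ by integrating the presentation along segments, show it has bounded length distortion, and then argue that $W^{1,2}_{\rm loc}$ regularity of $\rho$ is precisely the threshold that rules out branching. Your identification of the borderline $p=2$ energy blow-up near a codimension-$2$ branch set as the crux is correct in spirit. Be aware, though, that you have left the genuinely hard analytic step as a heuristic: passing from the clean winding-map model to an energy lower bound near a general branch set of a BLD map on a metric-measure space satisfying \eqref{X} is the substance of \cite{hk}, and your appeal to ``modulus-of-continuity and capacity theory'' is a placeholder rather than an argument. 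If you intend to supply a self-contained proof rather than a citation, that is where the real work lies.
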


\smallskip
\noindent
{\bf Non-smooth analysis.} 
Let $(X, d_X)$ and $(Y, d_Y)$ be metric spaces. For a function $\phi: X \map Y$, one can define its Lipschitz norm as usual with respect to the metrics $d_X$, $d_Y$. As an example, let $(M, g_M)$ and $(N, g_N)$ be $n$- and $\nu$-dimensional Riemannian manifolds, and denote by $dist_M$ and $dist_N$ the Riemannian distance functions on $M$, $N$ induced by the Riemannian metrics, respectively. The Lipschitz norm of $\phi: M \map N$ is 
\begin{equation*}
\|\phi\|_{{\rm Lip}(M,N)} := \sup_{x \neq y, \, x, y \in M}\, \frac{{\rm dist}_N\big(\phi(x),\phi(y)\big)}{{\rm dist}_M (x,y)}.
\end{equation*}
By Rademacher's theorem, at ${\rm d}V_{g_M}$-\textit{a.e.} point $x\in M$ a Lipschitz function $\phi:M\map N$ has well-defined differential $d_x\phi:T_xM \map T_{\phi(x)}N$. 

For $\phi: (X, d_X) \map (Y, d_Y)$ as above, its {\em generalised differential} is the set-valued function:
\begin{equation*}
\p \phi (x) := {\rm conv} \Big(\Big\{ \lim_{i \map \infty} d_{x_i}\phi:\, d_{x_i}\phi \text{ exists  and}  \, dist_M (x_i,x) \map 0 \Big\}\Big).
\end{equation*}
Here $conv$ denotes the convex hull. Note that for any $x\in X$, each element of $\mathfrak{m} \in \p \phi (x)$ can be identified with a matrix; thus we may introduce the following
\begin{definition}\label{def: singular set}
The singular set of $\phi$ {\em \`{a} la} Clarke (\cite{c1,c2}) is
\begin{equation*}
\sing\,\phi := \Big\{ x\in X: \text{ there exists }\mathfrak{m} \in \p\phi(x) \text{ that is not of the maximal rank}\Big\}.
\end{equation*}
\end{definition}

A function $\phi: X \map Y$ is said to be {\em approximable by smooth immersions} if for any $\e>0$ there exists a smooth immersion $\iota_\e: X\map Y$ such that $dist_N (\phi(x), \iota_\e(x)) \leq \e$ for any $x \in X$, and that $\|\iota_\e\|_{{\rm Lip}(X,Y)} \leq (1+\e)\|\phi\|_{{\rm Lip}(X,Y)}$.

\section{Proof}

This section is devoted to the proof of Theorem \ref{thm: main}. We shall establish the more general
\begin{theorem}\label{thm: general}
Let $X \subset \R^\nu$ be a locally Ahlfors $n$-regular, locally linearly contractible homological $n$-manifold; $\nu \geq n \geq 2$. Assume the existence of a Cartan--Whitney presentation in $W^{1,2}_{\rm loc}(X)$. Let $F: X \map \R^{\nu'}$ be a Lipschitz map, $\nu' \geq n$. Then the image $F(X)$ is a smoothable topological $n$-manifold if and only if $\sing\,F = \emptyset$. \end{theorem}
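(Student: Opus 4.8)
\textbf{Proof plan for Theorem \ref{thm: general}.}

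The plan is to reduce everything to the Heinonen--Keith machinery (Theorem \ref{thm: hk}) by transporting the Cartan--Whitney presentation of $X$ along $F$ and simultaneously relating the Clarke singular set $\sing\, F$ to the nondegeneracy condition \eqref{rho}. I would first dispose of the ``only if'' direction: if $F(X)$ is a smoothable topological $n$-manifold, then near any point it is a $C^\infty$-submanifold of $\R^{\nu'}$, and $F: X \map F(X)$ is a Lipschitz surjection between $n$-rectifiable sets of the same dimension. At a point $x$ where $\partial F(x)$ contains a matrix $\mathfrak{m}$ of non-maximal rank, one exhibits, via the definition of the generalised differential as a convex hull of limits of honest differentials $d_{x_i}F$, a nearby behaviour that forces the Jacobian to degenerate on a set of positive measure; combined with the coarea/area formula this contradicts the fact that $F$ maps $X$ onto a set which, being a smooth $n$-manifold, carries a Cartan--Whitney presentation pulled back from $\R^{\nu'}$ with a uniform lower volume bound. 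So $\sing\, F = \emptyset$.

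For the ``if'' direction — the substantive one — assume $\sing\, F = \emptyset$. The key step is to push the given Cartan--Whitney presentation $\rho = (\rho_1,\dots,\rho_n) \in W^{1,2}_{\rm loc}(X)$ of $X$ forward to a Cartan--Whitney presentation of $Y := F(X)$, still in $W^{1,2}_{\rm loc}$. Concretely: since $F$ is Lipschitz, by Rademacher it is differentiable $\mathcal{H}^n$-a.e.\ on $X$, and $\sing\, F = \emptyset$ means every $\mathfrak{m} \in \partial F(x)$ has rank $n$; in particular $d_xF$ has rank $n$ a.e., and by a compactness argument on the convex hull there is, for every compact $K \Subset X$, a uniform lower bound $\sigma_{\min}(d_xF) \geq \lambda_K > 0$ on $K$. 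One then checks that $Y$ itself satisfies \eqref{X}: local Ahlfors $n$-regularity and local linear contractibility are pushed forward using the bi-Lipschitz-type control coming from the uniform rank bound (here is where $\sing\, F = \emptyset$ is genuinely used — it upgrades ``Lipschitz surjection'' to ``locally bi-Lipschitz onto its image'', at least in the measure-theoretic sense needed by Semmes's criterion), and $Y$ is a homology $n$-manifold because it is a Lipschitz image with controlled local topology. Next one builds flat $1$-forms $\tilde\rho_j$ on a $\R^{\nu'}$-neighbourhood of each point of $Y$ whose restriction to $Y$ corresponds, under the a.e.-defined local homeomorphism $F$, to $\rho_j$; that the $\tilde\rho_j$ are flat ($L^\infty$ together with $d\tilde\rho_j \in L^\infty$) follows from $F \in {\rm Lip}$ and the chain rule for flat forms along Lipschitz maps, and the nondegeneracy $\star(\tilde\rho_1 \wedge \dots \wedge \tilde\rho_n) \geq c'' > 0$ a.e.\ on $Y$ follows from \eqref{rho} for $X$ together with the Jacobian lower bound $\lambda_K$. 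Finally, the $W^{1,2}_{\rm loc}$ regularity of $\tilde\rho_j$ is inherited from that of $\rho_j$: one estimates $\|{\rm ap}D\tilde\rho_j\|_{L^2}$ by the chain rule, using that first derivatives of $F$ are bounded (Lipschitz) — so no second-order information on $F$ is needed, which is the point of working with the Sobolev exponent $2$ rather than anything higher.

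Once $Y = F(X)$ is shown to carry a Cartan--Whitney presentation in $W^{1,2}_{\rm loc}(Y)$, Theorem \ref{thm: hk}(1) gives that $Y$ is locally bi-Lipschitz parametrised by $\R^n$; in particular $Y$ is a Lipschitz manifold, and the presentation furnishes (via the measurable cotangent bundle discussion, $\S 3.4$ of \cite{hk}) a measurable cotangent structure $(T^*U_\alpha, \iota)$ with local frames in $W^{1,2}_{\rm loc}$. Then Theorem \ref{thm: hk}(2) yields that $Y$ is smoothable as a Lipschitz — hence topological — $n$-manifold, which is the conclusion. The specialisation to Theorem \ref{thm: main}: take $X = M$ (smooth, hence trivially satisfying \eqref{X} and possessing a smooth, a fortiori $W^{1,2}_{\rm loc}$, Cartan--Whitney presentation from any local coframe normalised as in \eqref{theta i}) and $N \supset \R^{\nu'}$ locally; ``$F(M)$ smoothable'' combined with the uniform rank bound and a standard regularisation (mollify $F$ on smooth charts, as in Grove--Shiohama \cite{gs}) produces the required smooth immersions $\iota_\e$ with the Lipschitz bound $(1+\e)\|F\|_{{\rm Lip}}$.

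\textbf{Main obstacle.} The crux is the pushforward construction of the flat forms $\tilde\rho_j$ on the ambient $\R^{\nu'}$ and the verification that $Y$ satisfies \eqref{X}: $F$ is only Lipschitz and its a.e.-defined local inverse is not classically regular, so ``restricting'' and ``pushing forward'' flat forms must be done carefully in the measurable-cotangent-bundle framework, and the passage from the pointwise rank bound $\sigma_{\min}(d_xF) \geq \lambda_K$ (available a.e.\ from $\sing\, F = \emptyset$) to genuine local bi-Lipschitz control on $Y$ — needed for Ahlfors regularity and linear contractibility of $Y$ — is where the argument has to be most delicate; I expect this to require either an injectivity argument of degree-theoretic type (as in \cite{hr,hs}) or a direct appeal to the fact that a Lipschitz map with a.e.\ maximal-rank differential between $n$-rectifiable sets, with the Clarke condition excluding degenerate limits, is locally bi-Lipschitz.
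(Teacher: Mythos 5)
Your ``if'' direction follows essentially the same route as the paper: push the given Cartan--Whitney presentation $\rho$ forward along $F$, upgrade $\sing\,F=\emptyset$ to a uniform lower Jacobian bound by compactness and upper semicontinuity of the Clarke differential (this is exactly the paper's contradiction-plus-diagonalisation proof of \eqref{ess 2}), verify that the image satisfies \eqref{X} via two-sided determinant bounds and the area formula, and conclude with Theorem \ref{thm: hk}. The points you flag as delicate (injectivity/bi-Lipschitz control, the meaning of pushforward for flat forms, $W^{1,2}$ preservation) are treated at a comparable level of detail in the paper, so I will not press on them. The genuine gap is in your ``only if'' direction.

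There you argue that a rank-deficient $\mathfrak{m}\in\p F(x)$ ``forces the Jacobian to degenerate on a set of positive measure'', and that this contradicts the smoothness of the image via the area formula. Neither half of this works. First, a degenerate element of the Clarke differential does not force degeneration on a positive-measure set: for $F(x)=|x|$ one has $0\in\p F(0)$ while $|F'|=1$ almost everywhere; the degenerate matrix can arise as a convex combination of nondegenerate limits, or as a limit of differentials whose determinants tend to $0$ only along a null sequence. Second, and more fundamentally, intrinsic smoothness of the point set $F(X)$ places no constraint on the degeneracy of $F$ itself: the map $F(x,y)=(x^3,y)$ is Lipschitz on bounded sets, has $\sing\,F\supset\{x=0\}$, yet maps $\R^2$ onto the smooth manifold $\R^2$, so no contradiction can be extracted from the regularity of the image alone. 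The conclusion ``smoothable'' must instead be interrogated through the pushed-forward presentation $F_\#\rho$, which is what the paper does: by Theorem \ref{thm: hk}(2), failure of smoothability of the structure carried by $F_\#\rho$ forces $\essinf_{F(U)}\,\star F_\#\big(\rho_1\wedge\ldots\wedge\rho_n\big)=0$, and then the same sequence argument as in the forward direction, run in reverse with the lower bound \eqref{ess 1} and the identity $\star F_\#(\cdot)=\det dF\cdot\star(\cdot)$, produces points $q_j'\map q'$ with $\det dF(q_j')\map 0$ and hence a rank-deficient limit in $\p F(q')$. Your ``only if'' argument needs to be replaced by this (or an equivalent) route through the nondegeneracy of $F_\#\rho$; as written it would fail.
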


\begin{proof}[Proof for  Theorem \ref{thm: general} $\Rightarrow$ Theorem \ref{thm: main}]
Let $M$ be an $n$-dimensional compact Riemannian manifold. It admits a $C^k$-isometric embedding into $\R^\nu$, for $k \geq 3$ and $\nu$ large enough, by Nash's theorem \cite{nash}. Cover $M$ by finitely many coordinate charts and fix one such chart $U$. Denote by $\{\nicefrac{\p}{\p x^1},\ldots,\nicefrac{\p}{\p x^n} \} \subset \G(TU)$ a local orthonormal frame on $U$, and let $\{dx^1,\ldots,dx^n\} \subset \G(T^*U)$ be the corresponding coframe. Clearly $\{dx^1,\ldots,dx^n\}$  are flat $1$-forms with $\sup_{1\leq i \leq n}\|dx^i\|_{L^\infty(U)} \leq C<\infty$ and $d(dx^i)=0$. Moreover, since $dx^1 \wedge\ldots\wedge dx^n$ is a volume $n$-form, hence Eq.\,\eqref{rho} is verified. Thus, $\{dx^1,\ldots,dx^n\}$ constitute a Cartan--Whitney presentation in $C^\infty(U) \subset W^{1,2}_{\rm loc}(M)$.

Furthermore, since $F: M \map N$ is Lipschitz and $F(U)$ is precompact in the manifold $N$, by shrinking $U$ if necessary, we can take $F(U)$ lying in one single geodesic normal ball $\mathscr{B}$ on $N$. As the exponential map on $\mathscr{B}$ is a $C^\infty$-diffeomorphism, by composing with it we may assume that $F$ maps into the Euclidean space $\R^{\nu'}$.

Finally, by the local nature of the statement of Theorem \ref{thm: main}, it remains to show that  {\em under the assumption that $F(U)$ satisfies \eqref{X}}, $F|U$ is approximable by smooth immersions if and only if $F(U)$ is smoothable. For the forward implication, we may utilise verbatim the arguments on p.32 in \cite{hk}; in particular, the proof of Eqs.\,(8.9) and (8.10) and an application of the results in \cite{michael, s1}. For the converse, one may pass to a sub-atlas of the $C^\infty$-structure and take $\iota_\e \equiv F$. Thus Theorem \ref{thm: main} follows.   \end{proof}

We are now ready to show Theorem \ref{thm: general}. Heuristically, the key idea is that $\sing\,F=\emptyset$ prevents $F$ from pinching necks.

\begin{proof}[Proof of Theorem \ref{thm: general}]

Fix an open neighbourhood $U \subset X$, on which there is a given Cartan--Whitney presentation $\rho=(\rho_1, \ldots, \rho_n) \in W^{1,2}(U)$. Let $F: U \map \R^{\nu'}$ be a Lipschitz map. Again, by the local nature of the statement, we may assume that $F(U)$ is orientable and show that $F(U)$ is smoothable if and only if $\sing\,(F|U)=\emptyset$. In the sequel we view $F$ as defined on $U$.

To this end, consider the pushforward $n$-tuple of flat $1$-forms:
\begin{equation*}
F_\# \rho = \big( F_\#\rho_1, \ldots, F_\# \rho_n \big).
\end{equation*}
In view of Theorem \ref{thm: hk} (2), it then suffices to prove the equivalence between $\sing\,F=\emptyset$ and the following two conditions altogether: $F_\# \rho$ defines a measurable cotangent structure \textit{\`{a} la} Sullivan \cite{s1, s2, s3} on $F(U)$, and that
\begin{equation}\label{reg}
F_\#\rho \in W^{1,2}_{\rm loc}(\R^{\nu'}).
\end{equation}

The Sobolev regularity condition \eqref{reg} is automatic, as $W^{1,2}$-tensors are preserved under pushforward via Lipschitz functions. In the sequel, we show that $\sing\,F = \emptyset$ if and only if $F_\#\rho$ yields a measurable cotangent structure, momentarily assuming that $F(U)$ satisfies \eqref{X}.

Let us first suppose $\sing\,F=\emptyset$ and deduce that $F_\#\rho$ induces a measurable cotangent structure. By definition, for each $x \in U$, every element of the generalised differential $\p F(x)$ is of maximal rank. Denote by $\mathscr{E}$ the set of points on $U$ where $dF$ do not exist; $\mathcal{H}^n(\mathscr{E})=0$ by Rademacher's theorem. In addition, clearly a necessary condition for $\sing\,F=\emptyset$ is that the differential $dF: TU \map T\R^{\nu'}$ (defined in the distributional sense; see $\S \ref{sec: prelim}$) is invertible at $\mathcal{H}^n$-{\it a.e.} point on $U$.

Our goal is to show that 
	\begin{equation}\label{ess 2}
	 \essinf_{F(U)} \,\,\star F_\#\big(\rho_1\wedge\ldots\wedge \rho_n\big) \geq c_1 >0
	\end{equation}
under the assumption:
\begin{equation}\label{ess 1}
 \essinf_U \,\,\star\big( \rho_1 \wedge \ldots\wedge \rho_n\big) \geq c_0 >0.
	\end{equation}
Suppose \eqref{ess 2} were false. Then, for any $\e >0$ there would be a set $\Sigma \subset U$ with $\mathcal{H}^n(\Sigma) >0$ and $\star F_\#(\rho_1\wedge \ldots\wedge \rho_n) < \e$ on $\Sigma$. Without loss of generality we may take $\Sigma$ to be the metric ball $B(p,2r) \subset U $ with an $\mathcal{H}^n$-null set $\G$ removed, such that $\G \supset B(p,2r) \cap \mathscr{E}$. After passing to subsequences if needed, one can find a convergent sequence of points $\{q_i\} \subset \Sigma \sim \G$ such that $q_j \map q \in \overline{B(p,r)}$ and that 
\begin{equation}\label{small det}
\det \big(dF(q_j)\big) < \nicefrac{\e}{c_0}, 
\end{equation} 
where $c_0$ is as in \eqref{ess 1}. Indeed, observe the identity $$\star F_\#(dx^1\wedge \ldots\wedge dx^n) = \det\,dF \big\{ \star \big( \rho_1\wedge\ldots\wedge \rho_n \big) \big\}$$ 
wherever $dF$ is invertible;  the determinant is well-defined as $F$ is Lipschitz. Thus, in view of Eq.\,\eqref{ess 1}, we have
\begin{equation*}
\star F_\#(dx^1\wedge \ldots\wedge dx^n)  \geq c_0\, \det\, dF
\end{equation*}
outside an $\mathcal{H}^n$-null set on $U$. Thus Eq.\eqref{small det} follows.  However, in the limits of $\e \searrow 0$ and $j=j(\e) \nearrow \infty$, Eqs.\,\eqref{ess 1}, \eqref{small det} and the rank-nullity theorem imply (via a diagonalisation argument) that  $dF$ cannot be of maximal rank at the limiting point $q$. Thus $q \in \sing\,F$, which yields a contradiction.
	
	Conversely, assume that $F$ is not approximable by smooth immersions; we shall find a point $q'$ in $\sing\,F$. By definition, it suffices to find a sequence $\{q_j'\} \subset U \sim \mathscr{E}$ such that $q_j' \map q'$ and that $\lim_{j\nearrow\infty} dF(q_j')$ has rank less than $n$. Indeed, again due to Theorem \ref{thm: hk}, the non-approxmability of $F$ leads to
	\begin{equation}\label{ess 3}
	\essinf_{F(U)}\,\, \star F_\# \big(\rho_1 \wedge \ldots \wedge \rho_n\big) = 0.
	\end{equation}
In view of the Lipschitzness of $F$ and the precompactness of $U$, the essential infimum in Eq.\,\eqref{ess 3} is attained at a point. That is, for some $q' \in U$ there holds $\star F_\#(\rho_1\wedge\ldots\wedge \rho_n)\big(F(q')\big)=0$. Moreover, for any $\e>0$, there exists a sequence $\{q_j'\} \subset U \sim \mathscr{E}$ depending possibly on $\e$, such that $q_j' \map q'$  (hence $F(q_j')\map F(q')$) and  that  $$\star F_\#(\rho_1\wedge\ldots\wedge \rho_n)\big(F(q_j')\big) < \e.$$ Since $(\rho_1,\ldots,\rho_n)$ is a Cartan--Whitney presentation on $U$, by Eq.\,\eqref{ess 1} we may assume that all the $q_j'$ chosen above satisfy
\begin{equation*}
\star \big(\rho_1\wedge\ldots\wedge \rho_n\big)(q_j') \geq c_0 >0.
\end{equation*}
By an analogous estimate as for Eq.\,\eqref{det bounds}, we can again bound
\begin{equation*}
\det\, dF(q_j')\leq \nicefrac{\e}{c_0}.
\end{equation*}
In particular, the determinant of $dF$ are well-defined at each point $q_j'$. By sending $\e \searrow 0$, $j=j(\e) \nearrow \infty$ and using a standard diagonalisation argument, we find that any pointwise subsequential limit $\mathfrak{m}$ of $dF(q'_j)$ verifies
$${\rm rank}\,\,\mathfrak{m} \leq n-1.$$ 
So, as $\{q_j'\}$ converges to $q'$, we immediately obtain that $q' \in \sing\, F$.

We are now left to prove that $F(U)$ satisfies the structural assumptions in \eqref{X}. For this purpose, we shall make crucial use of Eq.\,\eqref{ess 2} established above. Indeed, by the Lipschitzness of $F$ let us rewrite Eqs.\,\eqref{ess 1} and \eqref{ess 2} as
\begin{eqnarray}
&c_0 \leq \essinf_{U} \star \big(\rho_1\wedge\ldots\wedge \rho_n\big) \leq  \esssup_{U} \star \big(\rho_1\wedge\ldots\wedge \rho_n\big) \leq C_0,\label{ess 1'}\\
&c_1 \leq \essinf_{U} \star F_\#\big(\rho_1\wedge\ldots\wedge \rho_n\big) \leq  \esssup_{U} \star F_\# \big(\rho_1\wedge\ldots\wedge \rho_n\big) \leq C_1.\label{ess 2'}
\end{eqnarray}
where $C_0$ depends on the flat norm of $\rho$, and $C_1$ additionally on the Lipschitz norm of $F$.

Indeed, Eqs.\,\eqref{ess 1'}\eqref{ess 2'} imply that
\begin{equation}\label{det bounds}
0< \lambda := \frac{c_1}{C_0} \leq \big|\det (dF)\big| \leq \frac{C_1}{c_0} =:\Lambda <\infty \qquad \mathcal{H}^n-\text{a.e. on } U.
\end{equation}
As a result, given any metric ball $\widehat{B}(x,r)\subset F(U)$, we can find radii $0<r_-<r_+$ such that the following inclusions hold {\em outside at most a $\mathcal{H}^n$-null set}:
\begin{equation}\label{inclusion}
F\big(B(x, r_-)\big) \subset \widehat{B}(x,r) \subset F\big(B(x, r_+) \big).
\end{equation}
Here and hereafter, we always use $\widehat{B}(\bullet, \bullet)$ to denote the metric balls in $F(U)$; the notation $B(\bullet, \bullet)$ is reserved for the metric balls in $U$. 

To proceed, notice that one may take $r_\pm$ to be equal to $r$ modulo a multiplicative factor depending only on $\Lambda$ and $\lambda$. Thus, by the area formula (\cite{f}, 3.2.20), there exists a constant $0<c_3<\infty$ depending only on $\Lambda, \lambda$, the Lipschitz norm of $F$ and the local data of $X$ such that $c_3 r^n\leq \mathcal{H}^n(B(x,r))\leq c_3r^n$. This gives the local $n$-Ahlfors regularity of $F(U)$. The local linear contractibility of $F(U)$ follows similarly from \eqref{inclusion}.

Finally, Let $\mathscr{B} \subset U$ be the set on which  Eq.\,\eqref{det bounds} fails. Then Eq.\,\eqref{hmlg mfd} clearly holds on $U \sim \mathscr{B}$, since $F$ is a homeomorphism thereon and hence leaves $H_\bullet(X,X\sim\{\ast\};\mathbb{Z})$ invariant. For $\ast \in \mathscr{B}$, by excision we have
\begin{equation*}
H_\bullet \Big(F(U) \cup \{\ast\} \sim \mathscr{B}, F(U) \sim \mathscr{B};\mathbb{Z}\Big) \cong H_\bullet\Big( F(U), F(U) \sim \{\ast\};\mathbb{Z}\Big).
\end{equation*}
Utilising the facts that $F$ is Lipschitz (hence continuous) on $U$, that $F$ is a bi-Lipschitz homeomorphism onto $F(U) \sim \mathscr{B}$ by Eq.\,\eqref{det bounds}, and that $U$ is a Lipschitz $n$-manifold modulo reparametrisations (thanks to Theorem \ref{thm: hk} (1)), we deduce that $F(U)\cup\{\ast\}\sim \mathscr{B}$ deformation retracts onto $F(U) \sim \mathscr{B}$. This allows us to compute the relative homology from the reduced homology $\widetilde{H}_\bullet$:
\begin{equation*}
H_\bullet \Big(F(U) \cup \{\ast\} \sim \mathscr{B}, F(U) \sim \mathscr{B};\mathbb{Z}\Big) \cong \widetilde{H}_\bullet\Bigg(\frac{F(U) \cup \{\ast\}\sim \mathscr{B}}{ F(U)\sim\mathscr{B}};\mathbb{Z}\Bigg) \cong  H_\bullet \big(\R^n; \R^n\sim\{0\};\mathbb{Z}\big).\end{equation*}
Hence $F(U)$ is a homology $n$-manifold.

The proof is now complete.   \end{proof}

\bigskip
\noindent
{\bf Acknowledgement}.
This work has been done during Siran Li's stay as a CRM--ISM postdoctoral fellow at Centre de Recherches Math\'{e}matiques, Universit\'{e} de Montr\'{e}al and Institut des Sciences Math\'{e}matiques. The author would like to thank these institutions for their hospitality.

\end{document}